\newtheorem{theorem}{Theorem}
\newtheorem{example}{Example}
\newtheorem{assumption}{Assumption}
\newtheorem{remark}{Remark}
\newtheorem{problem}{Problem}
\newtheorem{lemma}{Lemma}
\newtheorem{definition}{Definition}
\newenvironment{proof}{\begin{pf}}{\qed\end{pf}}
\begin{document}

\begin{frontmatter}

\title{Lossless Convexification and Duality \thanksref{footnoteinfo}\thanksref{footnoteinfo2}}

\thanks[footnoteinfo]{This paper was not presented at any IFAC meeting.}
\thanks[footnoteinfo2]{The work was supported by the BK21 FOUR from the Ministry of
Education (Republic of Korea) (Corresponding author: Donghwan Lee).
This work was supported by Institute of Information communications
Technology Planning Evaluation (IITP) grant funded by the Korea government
(MSIT)(No.2022-0-00469)}

\author[KAIST]{Donghwan Lee}\ead{donghwan@kaist.ac.kr},

\address[KAIST]{Department of Electrical Engineering,
KAIST, Daejeon, 34141, South Korea}

\begin{keyword}                           
Semidefinite programming; linear matrix inequality; control design; duality; Lagrangian function; optimization              
\end{keyword}                             

\begin{abstract}                          
The main goal of this paper is to investigate the strong duality of non-convex semidefinite programming problems (SDPs). In the optimization community, it is well-known that a convex optimization problem satisfies strong duality if Slater's condition holds. However, this result cannot be directly generalized to non-convex problems. In this paper, we prove that a class of non-convex SDPs with special structures satisfies strong duality under Slater's condition. Such a class of SDPs arises in SDP-based control analysis and design approaches. Throughout the paper, several examples are given to support the proposed results. We expect that the proposed analysis can potentially deepen our understanding of non-convex SDPs arising in the control community and promote their analysis based on KKT conditions.
\end{abstract}

\end{frontmatter}

\section{Introduction}

The field of computational control analysis and design has witnessed substantial advancements due to the development of convex optimization~\cite{Boyd2004} and semidefinite programming (SDP)~\cite{vandenberghe1996semidefinite} techniques, as investigated in various studies~\cite{Boyd1994,de1999new,geromel2007h,el2000advances}. In the control community, it is widely known that several important control design problems, such as the state-feedback stabilization and linear quadratic regulator (LQR) problems, can be effectively represented as convex SDP problems, which allow for efficient solution through convex optimization techniques. Interestingly, these convex SDP representations often originate as non-convex SDPs, which however can be efficiently transformed into convex forms through a change of variables~\cite{geromel1998static}. For example, the static state-feedback stabilization problem for a discrete-time linear system is initially posed as minimizing a nonconvex objective function that includes the spectral radius of the closed-loop system matrix. However, the problem can be efficiently solved through equivalent convex SDP problems~\cite{geromel1998static}. For convenience, let us call them losslessly convexifiable problems.

Theoretical studies of the losslessly convexifiable problems arising in control community hold the potential to provide valuable insights into the original problems, which will not only deepen our understanding but also facilitate the development of new and more efficient solution approaches for a range of optimization problems. However, these questions have not been fully explored to date. One of such unanswered questions is related to the strong duality property~\cite{Boyd2004}. Strong duality has been often studied for convex optimization problems in control field, e.g.,~\cite{vandenberghe1996semidefinite,Boyd1994,de2002extended,yao2001stochastic,rami2000linear}, because the connections between the strong duality property and the associated problems sometimes provide fruitful theoretical avenues and additional insights on the classical control theory such as the Lyapunov theory~\cite{henrion2001rank}, LQR problem~\cite{yao2001stochastic,rami2000linear,lee2019primal,gattami2010generalized}, Kalman-Yakubovich-Popov (KYP) lemma~\cite{balakrishnan2003semidefinite,you2013lagrangian,you2015primal}. However, it has not been fully investigated for the aforementioned losslessly convexifiable problems until now.

This paper proves that the losslessly convexifiable problems indeed satisfy strong duality, which plays a pivotal role in establishing the theoretical foundation for using the primal-dual algorithm as a reasonable approach to finding solutions to the original constrained optimization problems. These primal-dual algorithms are designed to solve the min-max problem formulations~\cite{bertsekas1999nonlinear} of the original constrained optimization problems, and numerous studies~\cite{lee2019primal, farjadnasab2022model, clarke2022low, li2022model, esmzad2023maximum} have recently developed primal-dual algorithms for various control design problems, yielding effective solutions. Therefore, the results presented in this paper can play a vital role for the theoretical analysis of the primal-dual methods in~\cite{lee2019primal, farjadnasab2022model, clarke2022low, li2022model, esmzad2023maximum}. Moreover, the presented results can offer a quite general and simple framework for establishing strong duality in various optimization problems in engineering.

Lastly, it is important to note that losslessly convexifiable problems have been explored in recent studies such as those by~\cite{sun2021learning,mohammadi2019global}. These studies have established an interesting link between this class of problems and the so-called ``gradient dominance condition,'' which is pivotal in gradient-based policy search algorithms~\cite{fazel2018global,bu2019lqr,duan2023optimization,duan2022optimization,duan2023optimization}. These contributions significantly enhance our understanding by showing that in policy optimization problems within control and reinforcement learning, the gradient dominance property of the underlying objective function remains valid despite its non-convex nature, provided the problems are losslessly convexifiable. This property is vital as it allows for the application of gradient-based policy search algorithms~\cite{fazel2018global,bu2019lqr,duan2023optimization,duan2022optimization,duan2023optimization} to efficiently find globally optimal solutions. The potential connection between the findings of these papers and our work presents a valuable avenue for future exploration.

{\bf Notation}: The adopted notation is as follows: ${\mathbb R}$: set of real
numbers\; ${\mathbb R}^n$: $n$-dimensional Euclidean space; ${\mathbb R}^{n \times m}$: set of all $n\times m$ real matrices; $A^T$: transpose of matrix $A$; $A\succ 0$ ($A\prec 0$, $A\succeq 0$, and $A\preceq 0$, respectively): symmetric positive definite (negative definite, positive semi-definite,
and negative semi-definite, respectively) matrix $A$; $A\succ B$ ($A\prec B$, $A\succeq B$, and $A\preceq B$, respectively): $A-B$ is symmetric positive definite (negative definite, positive semi-definite,
and negative semi-definite, respectively); $I$: identity matrix with appropriate dimensions; ${\mathbb S}^n$: symmetric $n\times n$ matrices; ${\mathbb S}^n_+$: cone of symmetric $n\times n$ positive semi-definite matrices; ${\mathbb S}^n_{++}$: symmetric $n \times n$ positive definite matrices; $Tr(\cdot)$: trace of matrix $(\cdot)$; $\rho(A)$: spectral radius of a square matrix $A$, where the spectral radius stands for the maximum of the absolute values of its eigenvalues; $*$ inside a matrix: transpose of its symmetric term; s.t.: subject to; ${\bf relint}({\mathcal D})$: relative interior of a set ${\mathcal D}$; ${\bf dom}(f)$: domain of a function $f$.

\section{Problem Formulation and Preliminaries}

In this section, we briefly summarize basic concepts of the standard Lagrangian duality theory in~\cite{Boyd2004}. Let us consider the following optimization problem with matrix inequalities (semidefinite programming, SDP), which is our main concern in this paper.
\begin{problem}[Primal problem]\label{prob:1}
Solve for $x \in {\mathbb R}^n$
\begin{align*}
p^*:=&\min_{x \in {\mathbb R}^n} f(x)\quad {\rm{s.t.}}\quad \Phi_i(x) \preceq 0,\quad i\in \{1,2,\ldots,N \}
\end{align*}
where $x\in {\mathbb R}^n$, $\Phi_i: {\mathbb R}^n \to {\mathbb S}^m$ is a continuous matrix function for all $i\in \{1,2,\ldots,N \}$, ${\hat n}$ is a positive integer, and $f: {\mathbb R}^n \to {\mathbb R}$ is a continuous objective function.
\end{problem}
Note that in~\cref{prob:1}, we assume that the minimum point exists. Moreover,
we assume that the domain, denoted by ${\mathcal D}: = {\bf{dom}}\,(f)$, is nonempty. An important property of~\cref{prob:1} that arises frequently is convexity.
\begin{definition}[Convexity]
\cref{prob:1} is said to be convex if $f$ is a convex function, and the feasible set, ${\mathcal F}:=\{ x \in {\mathbb R}^n :\Phi_i(x) \preceq 0,i\in \{1,2,\ldots,N \}\}$, is convex.
\end{definition}
Note that for the feasible set, ${\mathcal F}:= \{ x \in {\mathbb R}^n :\Phi_i(x) \preceq 0,i\in \{1,2,\ldots,N \}\}$, to be convex, $\Phi_i(x)$ needs to be linear or convex in $x$ for all $i\in \{1,2,\ldots,N \}$~\cite{Boyd2004}. Another essential concept is the relative interior~\cite[pp.~23]{Boyd2004} defined below.
\begin{definition}[Relative interior]\label{def:relative-interior}
The relative interior of the set $\mathcal D$ is defined as
\[
{\bf{relint}}({\mathcal D}): = \{ x \in {\mathcal D}:B(x,r) \cap {\bf{aff}}({\mathcal D}) \subseteq {\mathcal D}\,\,{\rm{for}}\,\,{\rm{some}}\,\,r > 0\},
\]
where $B(x,r)$ is a ball with radius $r>0$ centered at $x$, and ${\bf{aff}}({\mathcal D})$ is the affine hull of ${\mathcal D}$ defined as the set of all affine combinations of points in the set $\mathcal D$~\cite[pp.~23]{Boyd2004}.
\end{definition}

Associated with~\cref{prob:1}, the Lagrangian function~\cite{Boyd2004} is defined as
\[
L(x,\bar \Lambda ): = f(x) + \sum\limits_{i = 1}^N {Tr(\Lambda _i \Phi _i (x))}
\]
for any $\Lambda_i \in {\mathbb S}^m_+,i\in \{1,2,\ldots,N \}$, called the Lagrangian multiplier, where $\bar \Lambda : = (\Lambda _1 ,\Lambda _2 , \ldots ,\Lambda _N )$. For any $\Lambda_i \in {\mathbb S}^m_+,i\in \{1,2,\ldots,N \}$, we define the dual function as
\[
g(\bar \Lambda ): = \mathop {\inf }\limits_{x \in {\mathbb R}^n } L(x,\bar \Lambda ) = \mathop {\inf }\limits_{x \in {\mathbb R}^n } \left( {f(x) + \sum\limits_{i = 1}^N {Tr(\Lambda _i \Phi _i (x))} } \right).
\]
It is known that the dual function yields lower bounds on the optimal value $p^*$:
\begin{align}
g(\bar \Lambda ) \le p^*\label{eq:13}
\end{align}
for any Lagrange multiplier, $\Lambda_i \in {\mathbb S}^m_+, i\in \{1,2,\ldots,N \}$. The Lagrange dual problem associated with~\cref{prob:1} is defined as follows.
\begin{problem}[Dual problem]\label{prob:2}
Solve for $\Lambda_i \in {\mathbb S}^m_+, \forall i\in \{1,2,\ldots,N \}$
\begin{align*}
&d^*:=\sup_{\Lambda_i \in {\mathbb S}^m_+, \forall i\in \{1,2,\ldots,N \}} g (\bar \Lambda).
\end{align*}
\end{problem}
The dual problem is known to be concave even if the primal is not. In this context, the original~\cref{prob:1} is sometimes called the primal problem. Similarly, $d^*$ is called the dual optimal value, while $p^*$ is called the primal optimal value. The inequality~\eqref{eq:13} implies the important inequality
\[
d^*  \le p^*,
\]
which holds even if the original problem is not convex. This property is called weak duality, and the difference, $p^* - d^*$ is called the optimal duality gap. If the equality $d^* = p^*$ holds, i.e., the optimal duality gap is zero, then we say that strong duality holds.
\begin{definition}[Strong duality]
If the equality, $d^* = p^*$, holds, then we say that strong duality holds for~\cref{prob:1}.
\end{definition}
There are many results that establish conditions on the problem under
which strong duality holds. These conditions are called constraint qualifications. Once such constraint qualification is Slater's condition, which is stated below.
\begin{lemma}[Slater's condition]
Suppose that~\cref{prob:1} is convex. If there exists an $x\in  {\bf relint}({\mathcal D})$ such that
\[
\Phi_i (x) \prec 0,\quad i\in \{1,2,\ldots,N \}
\]
then strong duality holds, where ${\bf relint}({\mathcal D})$ is the relative interior~\cite[pp.~23]{Boyd2004} defined in~\cref{def:relative-interior}.
\end{lemma}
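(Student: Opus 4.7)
The plan is to adapt the classical geometric proof of Slater's theorem to the matrix-inequality setting. First, I would define the image set
\[
\mathcal{A} = \bigl\{ (U_1,\ldots,U_N,t) \in ({\mathbb S}^{\hat n})^N \times {\mathbb R} : \exists\, x\in\mathcal{D},\ \Phi_i(x)\preceq U_i,\ f(x)\le t \bigr\},
\]
and verify that $\mathcal{A}$ is convex. This follows from convexity of $f$ together with matrix-convexity of each $\Phi_i$, which is what is implicit in the standing hypothesis that the feasible set is convex.

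Next, I would run a supporting hyperplane argument. By definition of $p^*$, the half-line $\{(0,\ldots,0,t): t<p^*\}$ is disjoint from $\mathcal{A}$. Applying the separating hyperplane theorem on $({\mathbb S}^{\hat n})^N \times {\mathbb R}$ equipped with the trace inner product yields nonzero $(\Lambda_1^*,\ldots,\Lambda_N^*,\mu^*)$ with $\Lambda_i^*\succeq 0$ and $\mu^*\ge 0$ such that
\[
\sum_{i=1}^N Tr(\Lambda_i^* U_i) + \mu^* t \ge \mu^* p^*
\]
for all $(U_1,\ldots,U_N,t)\in\mathcal{A}$. Specializing to $U_i=\Phi_i(x)$ and $t=f(x)$ gives
\[
\mu^* f(x) + \sum_{i=1}^N Tr(\Lambda_i^* \Phi_i(x)) \ge \mu^* p^* \quad \text{for all } x\in\mathcal{D}.
\]

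The crux of the argument, and the step I expect to be the main obstacle, is to rule out the degenerate case $\mu^*=0$; this is where Slater's condition is indispensable. If $\mu^*=0$, evaluating the last inequality at the Slater point $x_s$ with $\Phi_i(x_s)\prec 0$ would give $\sum_i Tr(\Lambda_i^* \Phi_i(x_s))\ge 0$. Combined with $\Lambda_i^*\succeq 0$ and $\Phi_i(x_s)\prec 0$, this forces $\Lambda_i^*=0$ for every $i$, contradicting that the multipliers are not all zero. The relative-interior hypothesis $x_s\in{\bf relint}(\mathcal{D})$ is needed precisely to exclude separating hyperplanes supported entirely by directions orthogonal to ${\bf aff}(\mathcal{D})$, which would otherwise give a trivial separation incompatible with the strict inequalities $\Phi_i(x_s)\prec 0$; some care is required here to reduce to the affine hull before invoking the strict-inequality contradiction.

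Once $\mu^*>0$ is secured, setting $\Lambda_i:=\Lambda_i^*/\mu^*$ yields dual-feasible multipliers with $L(x,\bar\Lambda)\ge p^*$ for every $x\in\mathcal{D}$, so $g(\bar\Lambda)\ge p^*$. Combined with the weak-duality inequality $g(\bar\Lambda)\le p^*$ already recorded in~\eqref{eq:13}, this gives $d^* = g(\bar\Lambda) = p^*$, establishing strong duality.
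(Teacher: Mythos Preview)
The paper does not supply its own proof of this lemma; it is stated as a standard background result from \cite{Boyd2004}. That said, your proposal is correct, and it is precisely the separating-hyperplane argument the paper deploys in the proof of \cref{thm:strong-duality2-matrix}: there the paper defines the same epigraph set $\mathcal{A}$, separates it (via the convex envelope $\mathcal{A}'$) from $\mathcal{B}=\{(0,s):s<p^*\}$, deduces $\Lambda_i\succeq 0$ and $\mu\ge 0$, rules out $\mu=0$ using the Slater point exactly as you describe, and then divides by $\mu$ to obtain $g(\bar\Lambda/\mu)=p^*$. Your outline is the specialization of that argument to the case $\mathcal{A}=\mathcal{A}'$, so there is nothing to contrast.
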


Without the constraint qualifications, such as the Slater's condition, strong duality does not hold in general.
A natural question is, under which conditions the strong duality holds for non-convex problems?
Based on the ideas of Slater's condition, we will explore a class of non-convex problems which satisfies strong duality throughout the paper. For more comprehensive discussions on the duality, the reader is referred to the monograph~\cite{Boyd2004}.
\begin{assumption}\label{assumption:slater}
Slater's condition holds for~\cref{prob:1}.
\end{assumption}

\section{Main results}

\subsection{Lossless convexification}

In this subsection, we will study convexification of matrix inequality constrained optimizations, which have a special property to be addressed soon. Toward this goal, let us consider the following optimization problem.
\begin{problem}\label{prob:4}
Solve
\begin{align*}
&\inf_{v \in h({\mathcal D})} f'(v)\quad {\rm{s.t.}}\quad \Phi_i' (v) \preceq 0,\quad \forall i\in \{1,2,\ldots,N \}
\end{align*}
for some mapping $h$ such that $h({\mathcal D})$ is convex, where $\Phi_i': {\mathbb R}^n \to {\mathbb S}^{m'},i\in \{1,2,\ldots,N \}$ and $f': {\mathbb R}^n \to {\mathbb R}$ are convex, and $f$ and $\Phi$ can be expressed as
\begin{align*}
f (x) =& f' (h(x)) = (f'  \circ h)(x)\\
\Phi_i(x) =& \Phi_i'(h(x)) = (\Phi_i' \circ h)(x),\quad \forall i\in \{1,2,\ldots,N \}.
\end{align*}
\end{problem}

Note that~\cref{prob:4} is convex, and hence will be called a convexification of~\cref{prob:1}.
In particular, we will consider a special convexification called the lossless convexification defined below.
\begin{definition}[Lossless convexification]\label{def:lossless-convexification}
Consider the following feasible sets associated with~\cref{prob:1} and~\cref{prob:4}:
\begin{align}
{\mathcal F}: =& \{ x\in {\mathcal D} :,\Phi_i (x) \preceq 0,\forall i\in \{1,2,\ldots,N \}\},\label{eq:24}\\
{\mathcal F}':=& \left\{ {v \in h({\mathcal D}): \Phi_i'(v) \preceq 0,\forall i\in \{1,2,\ldots,N \}} \right\},\label{eq:25}
\end{align}
respectively, and suppose that the mapping $h$ whose domain restricted to $\mathcal F$ and codomain restricted to ${\mathcal F}'$, i.e., $h:{\mathcal F}  \to {\mathcal F}^\prime$, is a surjection. Then,~\cref{prob:4} is said to be a lossless convexification of~\cref{prob:1}.
\end{definition}

\begin{remark}
Note that $\mathcal F$ may be non-convex, while ${\mathcal F}'$ is convex. Moreover, it is important to note that the surjective mapping $h$, which is essential for verifying lossless convexification, can be identified manually and algebraically for each specific problem, yet there exists no universal method capable of automatically and algorithmically determining such a mapping.
\end{remark}
\begin{remark}
The studies in~\cite{sun2021learning,mohammadi2019global} introduce a novel concept akin to ``lossless convexification,'' establishing a crucial link between this concept and the so-called ``gradient dominance condition'' pivotal in gradient-based policy search algorithms~\cite{fazel2018global,bu2019lqr,duan2023optimization,duan2022optimization,duan2023optimization}. These papers make a significant contribution to our understanding by demonstrating that, in policy optimization problems within control and reinforcement learning, the presence of lossless convexification ensures the gradient dominance property of the underlying objective function, despite its non-convex nature. This property is crucial as it enables the application of gradient-based policy search algorithms~\cite{fazel2018global,bu2019lqr,duan2023optimization,duan2022optimization,duan2023optimization} to find globally optimal solutions. It is clear that there is a strong connection between the findings of these papers and our work. However, a more thorough analysis of this relationship would require extensive research beyond the scope of this paper, making it an intriguing topic for future exploration.
\end{remark}

In the sequel, simple examples are given to clearly illustrate the main notions of the lossless convexification.
\begin{example}\label{ex:1}
Let us consider the optimization
\begin{align}
&\min _{x \in {\mathbb R}} \quad f(x) = x^2\label{eq:3}\\
&{\rm{s}}{\rm{.t}}.\quad \Phi _1 (x) =  - x^2  + 1 \le 0,\nonumber
\end{align}
where $x \in {\mathbb R}$ is a decision variable. The corresponding constrained optimal solution is $x^* = 1$, the primal optimal value is $p^* = 1$, and ${\mathcal D} = {\mathbb R}$.
The feasible set is
\begin{align*}
{\mathcal F} =& \{ x \in {\mathbb R}: - x^2  + 1 \le 0\}\\
=& \{ x \in {\mathbb R} : x \le -1\}\cup \{ x \in {\mathbb R}: x \geq 1\},
\end{align*}
which is non-convex as depicted in~\cref{fig:0}.
Therefore, the optimization in~\eqref{eq:3} is non-convex. Now, let us consider the mapping
\[
x \mapsto h(x) =  - x^2,
\]
and the corresponding change of variable
\[
v = h(x) =  - x^2.
\]

Then, a convexification using $h$ is
\begin{align}
&\min _{v \in {\mathbb R}} \quad f'(v) =  - v\label{eq:5}\\
&{\rm{s}}{\rm{.t}}.\quad \Phi _1 '(v) = v + 1 \le 0.\nonumber
\end{align}
For this convexified problem, the corresponding feasible set is
\[
{\mathcal F}' = \{ v \in {\mathbb R}:v + 1 \le 0\},
\]
which is convex. Moreover, $h: {\mathcal F} \to {\mathcal F}'$ is a surjection because for all $v \in {\mathcal F}'$, we can find a function  $q(v) = \sqrt{-v}$ so that $q(v) \in {\mathcal F}$. Therefore,~\eqref{eq:3} is a lossless convexification of~\eqref{eq:5} by~\cref{def:lossless-convexification}. The overall idea is summarized in~\cref{fig:0}.
\begin{figure}[h!]
\centering\includegraphics[width=8.5cm,height=5cm]{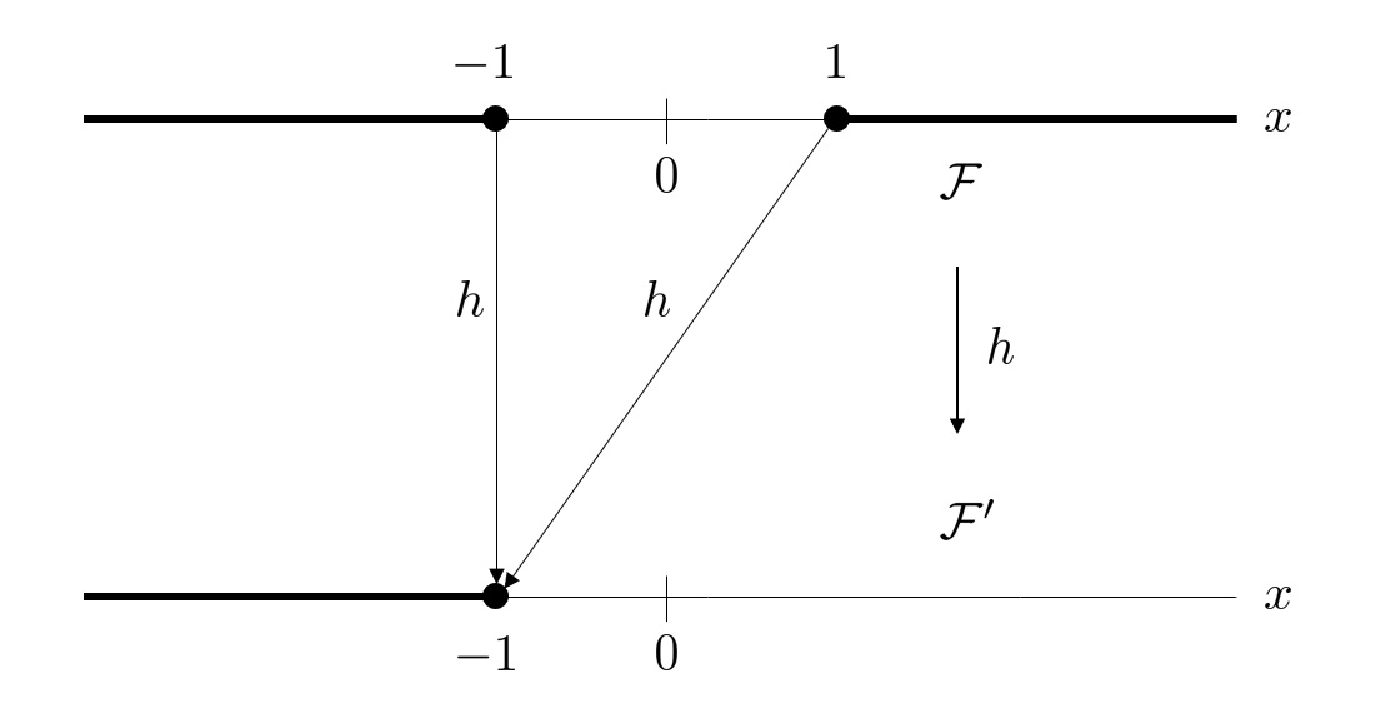}
\caption{\cref{ex:1}: Feasible sets, $\mathcal F$ and ${\mathcal F}'$, and mapping $h$.}\label{fig:0}
\end{figure}
\end{example}

\begin{example}\label{ex:2}
Let us consider the optimization problem
\begin{align}
&\min _{x_1 \in {\mathbb R},x_2 \in {\mathbb R}} \quad f(x_1 ,x_2 ) = x_1^2 + x_1 x_2^2\label{opt:1}\\
&{\rm{s}}{\rm{.t}}.\quad \Phi_1 (x_1 ,x_2 ) = 1-x_1 x_2^2 \leq 0, 1 - x_1  \le 0\nonumber
\end{align}
where ${\mathcal D} = {\mathbb R}^2$, and $x_1,x_2 \in {\mathbb R}$ are decision variables. For this problem, the constrained optimal solution is $x_1^* = x_2^* = 1$, the corresponding primal optimal value is $p^* = f(x^*) = 2$, and the corresponding feasible set is
\begin{align*}
{\mathcal F}: =& \{ (x_1 ,x_2 ) \in {\mathbb R}^2 :\Phi_1 (x) \le 0,\Phi_2 (x) \le 0\}\\
 =& \{ (x_1 ,x_2 ) \in {\mathbb R}^2 :x_1  \ge 1,x_1 x_2^2  \ge 1\},
\end{align*}
which is non-convex as shown in~\cref{fig:0b}.
\begin{figure}[h!]
\centering\includegraphics[width=9cm,height=7cm]{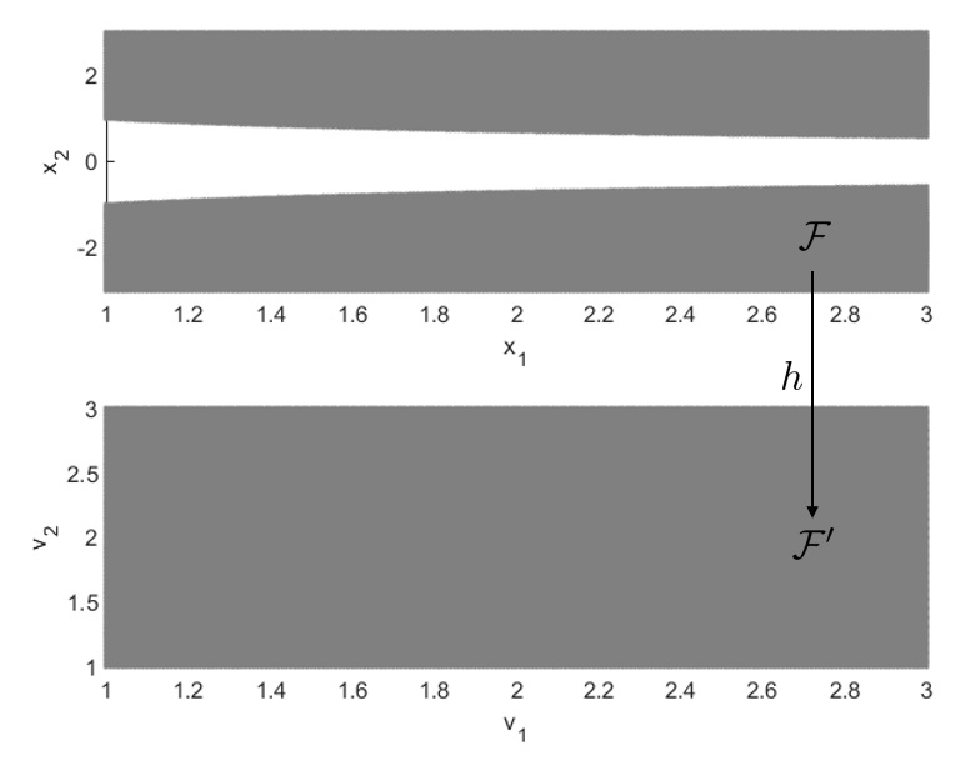}
\caption{\cref{ex:2}. Feasible sets, $\mathcal F$ and ${\mathcal F}'$, and mapping $h$.}\label{fig:0b}
\end{figure}

Now, let us consider the mapping
\begin{align*}
x = \left[ {\begin{array}{*{20}c}
   {x_1 }  \\
   {x_2 }  \\
\end{array}} \right] \mapsto h\left( {\left[ {\begin{array}{*{20}c}
   {x_1 }  \\
   {x_2 }  \\
\end{array}} \right]} \right) = \left[ {\begin{array}{*{20}c}
   {x_1 }  \\
   {x_1 x_2^2 }  \\
\end{array}} \right],
\end{align*}
and the corresponding change of variables
\begin{align*}
v = \left[ {\begin{array}{*{20}c}
   {v_1 }  \\
   {v_2 }  \\
\end{array}} \right] = h\left( {\left[ {\begin{array}{*{20}c}
   {x_1 }  \\
   {x_2 }  \\
\end{array}} \right]} \right) = \left[ {\begin{array}{*{20}c}
   {x_1 }  \\
   {x_1 x_2^2 }  \\
\end{array}} \right].
\end{align*}

Using this transformation, a convexification is given by
\begin{align}
&\min _{v_1  \in {\mathbb R},v_2  \in {\mathbb R}} \quad f'(v_1 ,v_2 ) = v_1^2 + v_2\label{eq:4}\\
&{\rm{s}}{\rm{.t}}.\quad \Phi_1' (v_1 ,v_2 ) = 1-v_2 \le 0,\quad \Phi_2' (v_1 ,v_2 ) = 1 - v_1  \le 0,\nonumber
\end{align}
where $h({\mathcal D}) = h({\mathbb R}^2 ) = {\mathbb R}^2$. The corresponding optimal solution is $v_1^*  = v_2^* = 1$, and the corresponding feasible set is
\begin{align*}
{\mathcal F}' =& \{ (v_1 ,v_2 ) \in {\mathbb R}^2 :\Phi _1 '(v) \le 0,\Phi _2 '(v) \le 0\}\\
=& \{ (v_1 ,v_2 ) \in {\mathbb R}^2 :v_1  \ge 1,v_2  \ge 1\},
\end{align*}
which is convex. Over $(v_1,v_2) \in {\mathcal F}'$, $v_1 \geq 1$ is invertible, and the inverse mapping, $h^{ - 1}: {\mathcal F}' \to {\mathcal F}$, is given by
\[
h^{ - 1}(v) = q(v) \left( {\left[ {\begin{array}{*{20}c}
   {v_1 }  \\
   {v_2 }  \\
\end{array}} \right]} \right) = \left[ {\begin{array}{*{20}c}
   {v_1 }  \\
   {v_2 /v_1 }  \\
\end{array}} \right] = \left[ {\begin{array}{*{20}c}
   {x_1 }  \\
   {x_2 }  \\
\end{array}} \right].
\]

Therefore, $h: {\mathcal F} \to {\mathcal F}'$ is a bijection (hence, a surjection),
and,~\eqref{eq:4} is a lossless convexification of~\eqref{opt:1} by~\cref{def:lossless-convexification}.
\end{example}

An implication of~\cref{def:lossless-convexification} is that solutions of~\cref{prob:4} have a surjective correspondence to solutions of~\cref{prob:1}. Therefore, even if~\cref{prob:1} is nonconvex, its solutions can be found from the convex~\cref{prob:4}.
Moreover, another property is that the existence of such a lossless convexification ensures strong duality of the original~\cref{prob:1} (with the Slater's condition). This result will be presented in the next subsection.

\subsection{Strong duality}
In this subsection, we investigate a relation between the lossless convexification and strong duality. Some preliminary definitions are first introduced below. Associated with~\cref{prob:1}, define the set
\begin{align*}
{\mathcal G}: =& \{ (\Phi _1 (x),\Phi _2 (x), \ldots ,\Phi _N (x),f(x))\\
& \in {\mathbb S}^n  \times {\mathbb S}^m  \times  \cdots  \times {\mathbb S}^m  \times {\mathbb R}:x \in {\mathcal D}\} ,
\end{align*}
which is called the graph of the constrained optimization problem in~\cref{prob:1}. The corresponding epigraph form~\cite{Boyd2004} is defined as
\begin{align}
{\mathcal A}: =& \{ (\bar U,t):\exists x \in {\mathcal D},\Phi_i(x) \preceq U_i,\forall i\in \{1,2,\ldots,N \}, f(x) \le t\}.\label{eq:23}
\end{align}
where $\bar U: = (U_1 ,U_2 , \ldots ,U_N )$. Note that $\mathcal A$ includes all the points in $\mathcal G$, as well as points that are `worse,' i.e., those with larger objective or inequality constraint function values~\cite{Boyd2004}. In other words, ${\mathcal A}$ can be also expressed as
\[
{\mathcal A} = {\mathcal G} + (\underbrace {{\mathbb S}_ + ^m  \times {\mathbb S}_ + ^m  \times  \cdots {\mathbb S}_ + ^m }_{N - {\rm{times}}} \times {\mathbb R}_ +  ),
\]
where ${\mathbb R}_ +$ is the set of nonnegative real numbers and `$+$' above is Minkowski sum. Similarly, for the convexified problem in~\cref{prob:4}, we define the graph and epigraph form as
\begin{align*}
{\mathcal G}': =& \{ (\Phi _1' (v),\Phi _2' (v), \ldots ,\Phi _N' (v),f'(v))\\
& \in {\mathbb S}^{m'}  \times {\mathbb S}^{m'}  \times  \cdots  \times {\mathbb S}^{m'}  \times {\mathbb R}:v \in h({\mathcal D})\},
\end{align*}
and
\begin{align*}
{\mathcal A}':=& \{ (\bar U,t): \exists v \in h({\cal D}),\\
&\Phi _i '(v) \preceq U_i ,\forall i \in \{ 1,2, \ldots ,N\} ,f'(v) \le t\}
\end{align*}
respectively. Note that since $\Phi _i', i\in \{1,2,\ldots, N \}$, and $f'$ are convex, so is ${\mathcal A}'$ as well.
Now, we are in position to present the main result.
\begin{theorem}[Strong duality]\label{thm:strong-duality2-matrix}
Suppose that~\cref{prob:4} is a lossless convexification of~\cref{prob:1}. If~\cref{prob:1} satisfies the Slater's condition, then strong duality holds for~\cref{prob:1}.
\end{theorem}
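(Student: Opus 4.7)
The plan is to reduce the non-convex \cref{prob:1} to the convex case by establishing the set identity $\mathcal{A}=\mathcal{A}'$ between the epigraphs of \cref{prob:1} and \cref{prob:4}. Once this identity is in hand, $\mathcal{A}$ inherits convexity from $\mathcal{A}'$, and the classical Slater-based separating hyperplane argument for SDP duality applies verbatim to $\mathcal{A}$, producing a Lagrange multiplier tuple that certifies $d^*\ge p^*$; combined with weak duality \eqref{eq:13} this yields $d^*=p^*$.

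First I would verify $\mathcal{A}=\mathcal{A}'$. For $\mathcal{A}\subseteq\mathcal{A}'$, given a witness $x\in\mathcal{D}$ for $(\bar U,t)\in\mathcal{A}$, set $v:=h(x)\in h(\mathcal{D})$; the composition identities $\Phi_i=\Phi_i'\circ h$ and $f=f'\circ h$ immediately give $\Phi_i'(v)\preceq U_i$ and $f'(v)\le t$, so $(\bar U,t)\in\mathcal{A}'$. The reverse inclusion picks any $x\in\mathcal{D}$ with $h(x)=v$ for the given $v\in h(\mathcal{D})$ and runs the same identities in reverse. Note that this step uses only the surjectivity of $h:\mathcal{D}\to h(\mathcal{D})$ (automatic by definition of the image), not the feasibility-restricted surjection $h:\mathcal{F}\to\mathcal{F}'$; what the latter contributes is that the primal optima of \cref{prob:1} and \cref{prob:4} coincide, via $p^*=\inf_{x\in\mathcal{F}}f'(h(x))=\inf_{v\in\mathcal{F}'}f'(v)$. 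Since $\mathcal{A}'$ is convex (as the epigraph of a convex problem), so is $\mathcal{A}$.

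The second step is the standard separating hyperplane argument applied to the now-convex $\mathcal{A}$. Let $\mathcal{B}:=\{(\bar 0,s):s<p^*\}$; optimality of $p^*$ forces $\mathcal{A}\cap\mathcal{B}=\emptyset$, and both sets are convex, so there exists $(\bar\Lambda^*,\mu^*)\ne 0$ and $\alpha\in\mathbb{R}$ with $\sum_{i=1}^N Tr(\Lambda_i^*U_i)+\mu^*t\ge\alpha\ge\mu^*s$ for every $(\bar U,t)\in\mathcal{A}$ and every $s<p^*$. Because $\mathcal{A}$ is closed under adding elements of $\mathbb{S}_+^{\hat n}\times\cdots\times\mathbb{S}_+^{\hat n}\times\mathbb{R}_+$, self-duality of $\mathbb{S}_+^{\hat n}$ under the trace inner product forces $\Lambda_i^*\succeq 0$ for each $i$ and $\mu^*\ge 0$. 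The Slater point $\bar x$ rules out the degenerate case $\mu^*=0$: substituting $(\Phi_1(\bar x),\ldots,\Phi_N(\bar x),f(\bar x))\in\mathcal{A}$ would give $\sum_i Tr(\Lambda_i^*\Phi_i(\bar x))\ge 0$, while $\Lambda_i^*\succeq 0$ together with $\Phi_i(\bar x)\prec 0$ makes each summand nonpositive and zero only when $\Lambda_i^*=0$, contradicting $(\bar\Lambda^*,\mu^*)\ne 0$. Rescaling by $\mu^*>0$ produces dual-feasible multipliers whose Lagrangian infimum is at least $p^*$, which together with weak duality completes the proof.

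The step I expect to be the main obstacle is the clean execution of the separating hyperplane argument in the matrix-inequality setting, specifically the two uses of the PSD cone's geometry: invoking self-duality of $\mathbb{S}_+^{\hat n}$ to interpret the separating functional as a tuple of PSD matrices, and exploiting the strictness $\Phi_i(\bar x)\prec 0$ to force \emph{each} $\Lambda_i^*$ individually, rather than merely the sum $\sum_i Tr(\Lambda_i^*\Phi_i(\bar x))$, to vanish. Both are routine once the identity $\mathcal{A}=\mathcal{A}'$ has delivered the convexity of $\mathcal{A}$; the genuine conceptual content of the theorem is concentrated in that identity.
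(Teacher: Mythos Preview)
Your argument is correct, and in fact cleaner than the paper's. The paper does \emph{not} establish the full identity $\mathcal{A}=\mathcal{A}'$: it splits $\mathcal{A}$ and $\mathcal{A}'$ into the regions $U_i\preceq 0$ and $U_i\npreceq 0$, proves $\mathcal{A}_1=\mathcal{A}_1'$ on the first region using the feasibility-restricted surjection $h:\mathcal{F}\to\mathcal{F}'$, but only claims the one-sided inclusion $\mathcal{A}_2\subseteq\mathcal{A}_2'$ on the second, explicitly asserting that the reverse ``does not hold in general.'' It then applies the separating hyperplane to the convex pair $(\mathcal{A}',\mathcal{B})$ and transfers the resulting inequality to $\mathcal{A}$ via $\mathcal{A}\subseteq\mathcal{A}'$; from there the Slater/rescaling endgame is exactly what you wrote.

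Your observation that $\mathcal{A}=\mathcal{A}'$ follows immediately from the tautological surjectivity of $h:\mathcal{D}\to h(\mathcal{D})$, together with $\Phi_i=\Phi_i'\circ h$ and $f=f'\circ h$, is correct and sidesteps the paper's case split entirely. It also shows that the ``lossless'' hypothesis---surjectivity of $h:\mathcal{F}\to\mathcal{F}'$---is not actually used in establishing strong duality; any convexification in the sense of \cref{prob:4} already forces $\mathcal{G}=\mathcal{G}'$ and hence $\mathcal{A}=\mathcal{A}'$. The paper's route, by contrast, leans on the lossless hypothesis in its Claim~1 and Claim~2 (part~1), which is why its picture of $\mathcal{A}'$ as a strict convex envelope of $\mathcal{A}$ persists. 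Your version is more elementary and exposes that the theorem's hypotheses are stronger than necessary for its conclusion; the paper's version, while correct, carries extra scaffolding that your argument shows to be dispensable.
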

\begin{proof}
Suppose that~\cref{prob:4} is a lossless convexification of~\cref{prob:1}, and~\cref{prob:1} satisfies the Slater's condition.
By definition, the primal optimal value satisfies
\[
p^*  = \inf \{ t \in {\mathbb R}:(\underbrace {0,0, \ldots ,0}_{N - {\rm{times}}},t) \in {\mathcal A}\} ,
\]
where $0$ denotes a zero matrix with compatible dimensions.

Using the fact that $h: {\mathcal F} \to {\mathcal F}'$ is a surjection, we can prove the following claim.
\begin{claim}
We have
\begin{align*}
p^*  =& \inf \{ t \in {\mathbb R}:(\underbrace {0,0, \ldots ,0}_{N - {\rm{times}}},t) \in {\mathcal A}\} \\
 =& \inf \{ t \in {\mathbb R}:(\underbrace {0,0, \ldots ,0}_{N - {\rm{times}}},t) \in {\mathcal A}^\prime  \} ,
\end{align*}
where $0$ denotes a zero matrix with compatible dimensions.
\end{claim}
{\bf Proof of~Claim~1}: We will prove the following identity: $\{ t \in {\mathbb R}:(0,t) \in {\mathcal A}\}  = \{ t\in {\mathbb R}:(0,t) \in {\mathcal A}'\}$.
Suppose that $(0,t) \in {\mathcal A}$, i.e., $\Phi_i (x) \preceq 0 ,f(x) \le t$ holds for some $x \in {\mathcal D}$. Then, $\Phi_i '(h(x)) = \Phi _i '(v) \preceq 0,f'(h(x))= f'(v) \le t$ also holds. This implies that $(0,t) \in {\mathcal A}'$ holds. On the other hand, suppose $\Phi _i' (v) \preceq 0 ,f'(v) \le t$ holds for some $v \in h({\mathcal D})$. Then, since $h: {\mathcal F} \to {\mathcal F}'$ is a surjection, there exists a mapping, $q:{\mathcal F}'\to {\mathcal F}$, such that $h(q(v))=v$, and hence, it follows that
\[
\Phi _i '(v) = \Phi _i '((h \circ q )(v)) = (\Phi _i ' \circ h)(q (v)) = \Phi _i (x) \preceq 0,
\]
and
\[
f'(v) = f'((h \circ q )(v)) = (f' \circ h)(q(v)) = f(x) \le t.
\]

Therefore, $\{ t\in {\mathbb R}:(0,t) \in {\mathcal A}\}  = \{ t\in {\mathbb R}:(0,t) \in {\mathcal A}'\}$, and hence $p^*  = \inf \{ t\in {\mathbb R} :(0,t) \in {\mathcal A}\}  = \inf \{ t\in {\mathbb R} :(0,t) \in {\mathcal A}'\}$. This completes the proof. $\blacksquare$

Using similar arguments, we can also prove the following key result.
\begin{claim}\label{claim:1}
Let us divide ${\mathcal A}$ into the two sets
\begin{align*}
{\mathcal A}_1 :=\{ (\bar U,t) \in {\mathcal A}:U_i \preceq 0, \forall i\in \{1,2,\ldots,N \}\}
\end{align*}
and
\begin{align*}
{\mathcal A}_2:=\{ (\bar U,t) \in {\mathcal A}:U_i \npreceq 0 \,\, \text{for some}\,\, i\in \{1,2,\ldots,N \}\},
\end{align*}
so that ${\mathcal A}_1 \cup {\mathcal A}_2 = {\mathcal A}$.
Moreover, divide ${\mathcal A}'$ into the two sets
\begin{align*}
{\mathcal A}_1' :=\{ (\bar U,t) \in {\mathcal A}':U_i \preceq 0,\forall i\in \{1,2,\ldots,N \}\}
\end{align*}
and
\begin{align*}
{\mathcal A}_2' :=\{ (\bar U,t) \in {\mathcal A}':U_i \npreceq 0,\,\, \text{for some} \,\, i\in \{1,2,\ldots,N \}\}
\end{align*}
so that ${\mathcal A}_1' \cup {\mathcal A}_2' = {\mathcal A}'$.
Then, the following statements hold true:
\begin{enumerate}
\item ${\mathcal A}_1 = {\mathcal A}_1'$,

\item ${\mathcal A}_2 \subseteq {\mathcal A}_2'$,

\item ${\mathcal A} \subseteq {\mathcal A}'$.
\end{enumerate}
\end{claim}
{\bf Proof of Claim~2}: We first prove the identity ${\mathcal A}_1 = {\mathcal A}_1'$. This part is similar to the proof of~{\bf Claim~1}.
Suppose that $U_i \preceq 0, \forall i\in \{1,2,\ldots,N \}$ holds, and $(\bar U,t) \in {\mathcal A}$, i.e., $\Phi_i (x) \preceq U_i ,f(x) \le t$ holds for some $x \in {\mathcal D}$. Then, $\Phi_i '(h(x)) = \Phi _i '(v) \preceq U_i ,f'(h(x))= f'(v) \le t$ also holds. This implies that $(\bar U,t) \in {\mathcal A}'$ holds. On the other hand, suppose $\Phi _i' (v) \preceq U_i \preceq 0 ,f'(v) \le t$ holds for some $v \in h({\mathcal D})$. Then, since $h: {\mathcal F} \to {\mathcal F}'$ is a surjection, there exists a mapping, $q:{\mathcal F}'\to {\mathcal F}$, such that $h(q(v))=v$, and hence, it follows that
\[
\Phi _i '(v) = \Phi _i '((h \circ q )(v)) = (\Phi _i ' \circ h)(q (v)) = \Phi _i (x) \preceq U_i ,
\]
and
\[
f'(v) = f'((h \circ q )(v)) = (f' \circ h)(q(v)) = f(x) \le t.
\]
Therefore, ${\mathcal A}_1 = {\mathcal A}_1'$ holds.

On the other hand, if $U_i \npreceq 0$ for some $i\in \{1,2,\ldots,N \}$, then ${\mathcal A}_2 \subseteq {\mathcal A}_2'$ is satisfied.
In particular, suppose that $\Phi_i (x)\preceq U_i ,f(x) \le t$ holds for some $x \in {\mathcal D}$. Then, $\Phi_i'(h(x)) = \Phi _i '(v) \preceq U_i ,f'(h(x))= f'(v) \le t$ also holds. Therefore, $(\bar U,t) \in {\mathcal A}'$, which implies ${\mathcal A}_2 \subseteq {\mathcal A}_2'$. The reverse does not hold in general. To see it, suppose $\Phi _i' (v) \preceq U_i ,f'(v) \le t$ holds for some $v \in h({\mathcal D})$. Then, since there is no guarantee that $h:{\mathcal F} \to {\mathcal F}'$ is a surjection when $U_i \npreceq 0$ for some $i\in \{1,2,\ldots,N \}$, we cannot ensure that there exists $x$ such that $\Phi _i (x) \preceq U_i ,f(x) \le t$. The statement~3) is implied by the statement~1) and statement~2). This completes the proof. $\blacksquare$

Next, let us return to our main concern. To prove~\cref{thm:strong-duality2-matrix}, we define the set
\[
{\mathcal B}: = \{ (0,s):s < p^* \}.
\]
Since ${\mathcal A}'$ and $\mathcal B$ are convex, there exists a separating hyperplane that separates $\mathcal A'$ and $\mathcal B$ by the separating hyperplane theorem in~\cite[sec.~2.5]{Boyd2004}, as shown in~\cref{fig:2}.
\begin{figure}[h!]
\centering\includegraphics[width=9cm,height=6cm]{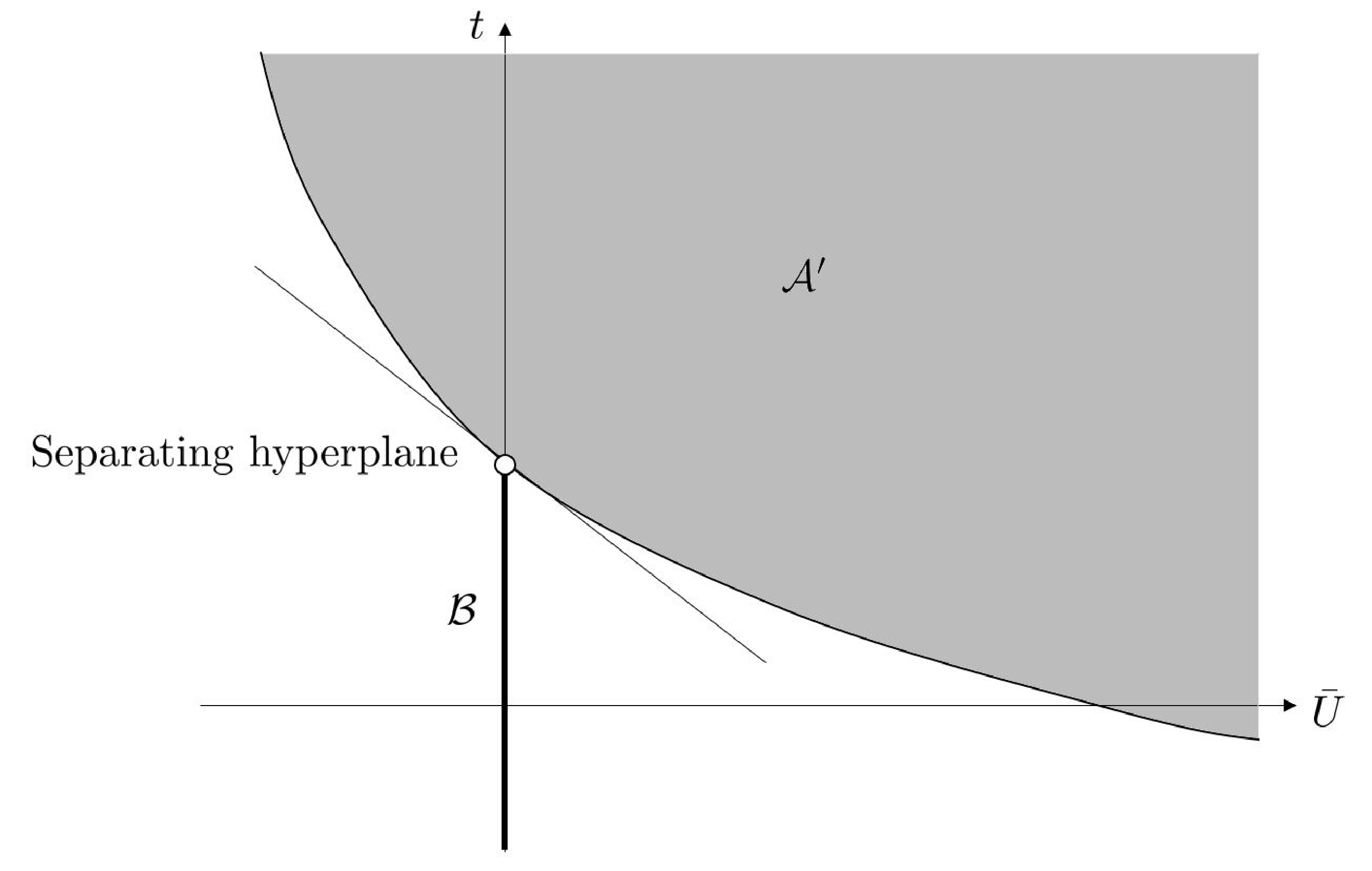}
\caption{Two convex sets, ${\mathcal A}'$ (shaded region) and $\mathcal B$, and a separating hyperplane. }\label{fig:2}
\end{figure}

In particular, the existence of a separating hyperplane implies that there exists $(\bar \Lambda,\mu)\neq 0$ and $\alpha \in {\mathbb R}$ such that
\begin{align}
(\bar U,t) \in {\mathcal A}' \Rightarrow \sum\limits_{i = 1}^N {Tr(\Lambda _i U_i )}  + \mu t \ge \alpha \label{eq:26}
\end{align}
and
\[
(\bar U,t) \in {\mathcal B} \Rightarrow \sum\limits_{i = 1}^N {Tr(\Lambda _i U_i )}  + \mu t \le \alpha.
\]

Using the inclusion ${\mathcal A} \subseteq {\mathcal A}'$ in~{\bf Claim~2}, it follows from~\eqref{eq:26} that
\begin{align}
(\bar U,t) \in {\mathcal A} \Rightarrow \sum\limits_{i = 1}^N {Tr(\Lambda _i U_i )}  + \mu t \ge \alpha
\label{eq:2b}
\end{align}
and
\begin{align}
(\bar U,t) \in {\mathcal B} \Rightarrow \sum\limits_{i = 1}^N {Tr(\Lambda _i U_i )}  + \mu t \le \alpha \label{eq:3b}
\end{align}

From~\eqref{eq:2b}, we conclude that $\Lambda_i \succeq 0,i\in \{1,2,\ldots,N \}$ and $\mu\geq 0$. Otherwise $\sum\limits_{i = 1}^N {Tr(\Lambda _i U_i )}  + \mu t$ is unbounded below over ${\mathcal A}$, contradicting~\eqref{eq:2b}.
The condition~\eqref{eq:3b} simply means that $\mu t \leq \alpha$ (since $\bar U =0$) for all $t < p^*$, and hence, $\mu p^* \leq \alpha$. Together with~\eqref{eq:2b}, we conclude that for any $x \in \mathcal D$
\begin{align}
\sum\limits_{i = 1}^N {Tr(\Lambda _i \Phi _i (x))}  + \mu f(x) \ge \alpha  \ge \mu p^* \label{eq:4b}
\end{align}
Assume that $\mu >0$. In that case we can divide~\eqref{eq:4b} by $\mu$ to obtain
\[
L\left( {x,\frac{{\bar \Lambda }}{\mu }} \right) = \sum\limits_{i = 1}^N {Tr\left( {\Phi _i (x)\frac{{\Lambda _i }}{\mu }} \right)}  + f(x) \ge p^*
\]
for all $x \in \mathcal D$, from which it follows, by minimizing over $x$, that $g\left( {\frac{\bar \Lambda }{{\mu }}} \right) \ge p^*$. By weak duality we have $g\left( {\frac{\bar \Lambda }{{\mu}}} \right) \le p^*$, so in fact $g\left( {\frac{\bar \Lambda }{{\mu}}} \right) = p^*$. This shows that strong duality holds, and that the dual optimum is attained, at least in the case when $\mu >0$.

Now consider the case $\mu =0$. From~\eqref{eq:2b}, we conclude that for all $x\in \mathcal D$,
\[
\sum\limits_{i = 1}^N {Tr(\Lambda _i \Phi _i (x))}  \ge 0
\]
Assume that $\tilde x$ is the point that satisfies the Slater condition, i.e., $\Phi_i(\tilde x) \prec 0,i\in \{1,2,\ldots,N \}$. Then, we have $\sum\limits_{i = 1}^N {Tr(\Lambda _i \Phi _i (\tilde x))}  \ge 0$. Since $\Phi_i(\tilde x) <0,i\in \{1,2,\ldots,N \}$ and $\Lambda \succeq 0$, we conclude that $\Lambda_i = 0,i\in \{1,2,\ldots,N \}$. Therefore, $\Lambda_i = 0,i\in \{1,2,\ldots,N \}$ and $\mu = 0$, which contradicts $(\bar \Lambda,\mu)\neq 0$. Intuition of the overall ideas are illustrated in~\cref{fig:1}. ${\mathcal A}'$ is convex, while $\mathcal A$ may not be in general. $\mathcal A$ and ${\mathcal A}'$ are identical over $U_i \preceq 0,i\in \{1,2,\ldots,N \}$, while ${\mathcal A}'$ includes $\mathcal A$ when $U_i \npreceq 0,i\in \{1,2,\ldots,N \}$. Overall, the boundary of ${\mathcal A}'$ is a convex envelope of $\mathcal A$, and both ${\mathcal A}'$ and ${\mathcal A}$ share the same separating hyperplane.
\begin{figure}[h!]
\centering\includegraphics[width=9cm,height=6cm]{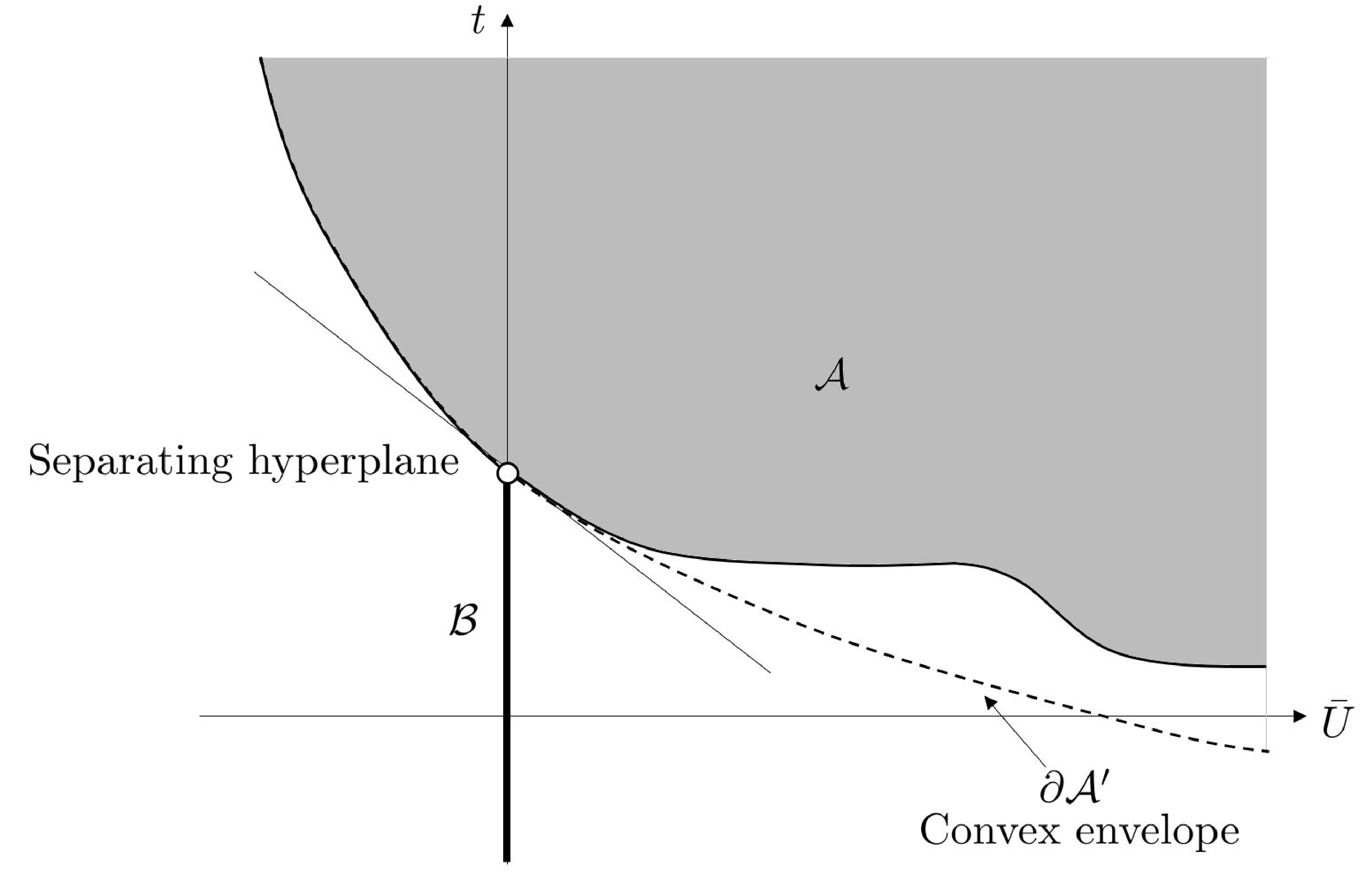}
\caption{Sets $\mathcal A$ (shaded region), $\mathcal B$, the separating hyperplane, and the boundary of ${\mathcal A}'$ (dashed line), denoted by  $\partial {\mathcal A}'$}\label{fig:1}
\end{figure}

\end{proof}

In the sequel, several examples are given to illustrate the ideas of~\cref{thm:strong-duality2-matrix}.
\begin{example}\label{ex:3}
Let us consider~\cref{ex:1} again. The optimization in~\eqref{eq:3} satisfies the Slater's condition because with $x= 2$, we have $\Phi _1 (x) =  -3 < 0$. Moreover,~\eqref{eq:3} a lossless convexification in~\eqref{eq:5}. Therefore, strong duality holds by~\cref{thm:strong-duality2-matrix}. To prove it directly, consider the corresponding Lagrangian function
\[
L(x,\lambda) = x^2  + \lambda ( - x^2  + 1),
\]
where $\lambda \geq 0$ is the Lagrangian multiplier, and the dual function
\begin{align*}
g(\lambda ) =& \inf _{x \in {\mathbb R}} L(x,\lambda )\\
 =& \inf _{x \in {\mathbb R}} \{ x^2  + \lambda ( - x^2  + 1)\}.
\end{align*}

To obtain a more explicit form of the dual function, one can observe that this dual function is bounded below in $x$ only when $\lambda \leq 1$. For $\lambda > 1$, we have $g(\lambda )$ unbounded below in $x$.
Therefore, we can conclude that the dual function is
\[
g(\lambda ) = \left\{ {\begin{array}{*{20}c}
   { - \infty ,\quad {\rm{if}}\,\,\lambda  > 1}  \\
   {\lambda ,\quad {\rm{if}}\,\,0 \le \lambda  \le 1}  \\
\end{array}} \right.
\]

Then, the corresponding dual optimal value is $d^* = \sup _{\lambda  \ge 0} g(\lambda ) = 1=p^*$. Therefore, strong duality holds.
\end{example}

\begin{example}
Let us consider~\cref{ex:2} again. The optimization~\eqref{opt:1} satisfies the Slater's condition because with $x_1 = 2$ and $x_2 = 1$, we have $\Phi_1 (x_1 ,x_2 ) = -3 < 0, \Phi_1 (x_1 ,x_2 ) = -1 < 0$. Moreover,~\eqref{opt:1} admits the lossless convexification~\eqref{eq:4}. Therefore, by~\cref{thm:strong-duality2-matrix}, strong duality holds for~\eqref{opt:1}. The corresponding Lagrangian function is
\[
L(x,\bar \Lambda ) = x_1^2  + x_1 x_2^2  + \lambda _1 (1 - x_1 x_2^2 ) + \lambda _2 (1 - x_1 ),
\]
where $\bar \Lambda =  (\lambda _1, \lambda _2)$ is the Lagrangian multipliers or the dual variables. Let us manually check if strong duality really holds. The dual function is given by
\begin{align*}
g(\bar \Lambda ) =& \inf_{x_1\in {\mathbb R} ,x_2  \in {\mathbb R}} L(x,\bar \Lambda )\\
=& \inf _{x_1\in {\mathbb R} ,x_2 \in {\mathbb R}} \{ x_1^2  + x_1 x_2^2  + \lambda _1 (1 - x_1 x_2^2 ) + \lambda _2 (1 - x_1 )\}\\
=& \inf _{x_1 \in {\mathbb R},x_2 \in {\mathbb R}} \{ x_1^2  - \lambda _2 x_1  + (1 - \lambda _1 )x_1 x_2^2  + \lambda _1  + \lambda _2 \}.
\end{align*}
To obtain an explicit form of the dual function, we first find extrema of $L(x,\bar \Lambda )$ by checking the following first-order optimality conditions:
\begin{align}
&\frac{d}{{dx_1 }}\{ x_1^2  - \lambda _2 x_1  + (1 - \lambda _1 )x_1 x_2  + \lambda _1  + \lambda _2 \}\nonumber\\
=& 2x_1  - \lambda _2  + (1 - \lambda _1 )x_2^2\nonumber \\
=& 0\label{eq:1}
\end{align}
and
\begin{align}
&\frac{d}{{dx_2 }}\{ x_1^2  - \lambda _2 x_1  + (1 - \lambda _1 )x_1 x_2^2  + \lambda _1  + \lambda _2 \}\nonumber\\
=& 2(1 - \lambda _1 )x_1 x_2  = 0.\label{eq:2}
\end{align}

To satisfy~\eqref{eq:2}, in the case $1 - \lambda _1  \ne 0$, we have $x_1 = 0$ or $x_2 = 0$.
If $x_1=0$, then $x_2  = \sqrt {\frac{{\lambda _2 }}{{1 - \lambda _1 }}}$ provided that $1 > \lambda _1$. Therefore, the dual function satisfies
\begin{align*}
\mathop {\sup }\limits_{\lambda _1 ,\lambda _2  \ge 0} g(\bar \Lambda ) =& \mathop {\sup }\limits_{\lambda _1 ,\lambda _2  \ge 0} \left\{ {x_1^2  - \lambda _2 x_1  + (1 - \lambda _1 )x_1 x_2^2  + \lambda _1  + \lambda _2 } \right\}\\
 =& \mathop {\sup }\limits_{\lambda _1 ,\lambda _2  \ge 0} \left\{ {\lambda _1  + \lambda _2 :1 > \lambda _1 } \right\}\\
 =& \infty.
\end{align*}
Therefore, this case is discarded.
If $x_2=0$, then $x_1 = \lambda_2/2$, and the dual problem is
\begin{align*}
\mathop {\sup }\limits_{\lambda _1 ,\lambda _2  \ge 0} g(\bar \Lambda ) =& \mathop {\sup }\limits_{\lambda _1 ,\lambda _2  \ge 0} \left\{ {x_1^2  - \lambda _2 x_1  + (1 - \lambda _1 )x_1 x_2^2  + \lambda _1  + \lambda _2 } \right\}\\
=& \mathop {\sup }\limits_{\lambda _1 ,\lambda _2  \ge 0} \left\{ { - \frac{{\lambda _2^2 }}{4} + \lambda _1  + \lambda _2 :1 - \lambda _1  \ne 0} \right\}\\
=& \infty.
\end{align*}
Therefore, this case is also discarded, and $1 - \lambda _1  = 0$ should be hold. In this case, we have
\[
x_1  = \frac{{\lambda _2 }}{2},\quad x_2  \in {\mathbb R},
\]
and the corresponding dual problem is
\begin{align*}
&\sup _{\lambda _1 ,\lambda _2  \ge 0} g(\bar \Lambda )\\
 =& \sup _{\lambda _1 ,\lambda _2  \ge 0} \left\{ { - \frac{{\lambda _2^2 }}{4} + \lambda _1  + \lambda _2 :\,1 - \lambda _1  = 0} \right\}\\
  =& 2,
\end{align*}
whose optimal dual value is $d^* = 2$. Therefore, $d^* = 2 = p^*$, and strong duality holds.
\end{example}

In this section, the notion of the lossless convexification has been introduced, and a relation between the lossless convexification and strong duality has been established. In the next section, we will present a formulation of the state-feedback problem, and prove the corresponding strong duality using the results in this paper.
\begin{remark}
We note that simple examples are presented until now as a form of sanity check, yet the results have broader applicability to general optimization problems. The subsequent section delves into some applications in the field of control engineering.
\end{remark}
\begin{remark}
Many control design challenges, including the linear quadratic regulator (LQR), linear quadratic Gaussian (LQG), stabilization problem, and the $H_\infty$ control design problem, can be expressed as constrained optimization tasks aiming to minimize nonconvex objective functions. These problems often benefit from the application of results that demonstrate strong duality in the corresponding optimization tasks. Specifically, these optimization problems satisfy the Karush-Kuhn-Tucker (KKT) conditions, which are both necessary and sufficient when the principles of strong duality are applied. This understanding opens up new algorithmic avenues for solving the original problems, such as the implementation of primal-dual algorithms, leading to effective solutions. Recent studies, for example, \cite{lee2019primal, farjadnasab2022model, clarke2022low, li2022model, esmzad2023maximum}, have developed primal-dual algorithms for optimal control problems, with the concept of strong duality playing a pivotal role. The results presented in this paper are crucial for the theoretical analysis of these methods and offer a general and straightforward framework for establishing strong duality in a variety of optimization problems found in engineering.

The application of strong duality not only enriches the theoretical understanding by confirming the solvability of the KKT conditions but also opens new theoretical research pathways. For instance, \cite{lee2019primal} reveals that the solution to the KKT conditions corresponds to the solution of the Riccati equations, thereby establishing an intriguing theoretical connection between the KKT solution and the Riccati equations through the concept of strong duality.
\end{remark}

\begin{remark}
As previously mentioned, the research presented in~\cite{sun2021learning,mohammadi2019global} establishes a critical link between the concept of lossless convexification and the gradient dominance condition in gradient-based policy search algorithms. These studies demonstrate that in policy optimization problems within control and reinforcement learning, the presence of lossless convexification ensures the gradient dominance property of the underlying objective function, despite its inherent non-convexity.
It is anticipated that the proposed strong duality property shares some connections with the properties explored in~\cite{sun2021learning,mohammadi2019global}. Understanding these connections could provide additional insights, enhancing our comprehension and potentially aiding in the development of new methodologies for solving various optimization problems.
\end{remark}

\section{Example: state-feedback stabilization}
In this section, we present a control problem where the proposed results can be applied.

\subsection{Continuous-time case}
Let us consider the continuous-time linear time-invariant (LTI) system
\begin{align}
\dot x(t) = Ax(t) + Bu(t),\quad x(0) \in {\mathbb R}^n,\label{eq:linear-system}
\end{align}
where $A \in {\mathbb R}^{n\times n}$, $B \in {\mathbb R}^{n\times m}$, $t \geq 0$ is the time, $x(t) \in {\mathbb R}^n$ is the state
vector, and $u(t) \in {\mathbb R}^m$ is the input vector. One of the most fundamental problems for this LTI system is the state-feedback stabilization problem, which is designing a state-feedback control input, $u(t) = F x(t)$, where $F \in {\mathbb R}^{m\times n}$ is called the state-feedback gain matrix, such that the closed-loop system, $\dot x(t) = Ax(t) + Bu(t) = (A + BF)x(t)$, is asymptotically stable~\cite{khalil2002nonlinear}. Some related concepts in the standard linear system theory~\cite{chen1995linear} are briefly reviewed first.
\begin{definition}[{\cite[page~55]{khalil2015nonlinear}}]
A matrix $A\in {\mathbb R}^{n\times n}$ is said to be Hurwitz when all the eigenvalues of $A$ have strictly negative real parts.
\end{definition}
\begin{lemma}[{\cite[page~55]{khalil2015nonlinear}}]\label{thm:stability}
The system $\dot x(t) = Ax(t)$ is asymptotically stable if and only if $A$ is Hurwitz.
\end{lemma}

The Laypunov method provides a way to check the stability of $\dot x(t) = Ax(t)$ and the Hurwitz property of $A$ without actually calculating the eigenvalues.
\begin{lemma}[{\cite[Thm.~5.5]{chen1995linear}}]\label{thm:Lyapunov-theorem}
A matrix $A \in {\mathbb R}^{n\times n}$ is Hurwitz if and only if there exists a symmetric matrix $P\in {\mathbb S}^n$ such that
\[
P \succ 0,\quad A^T P + PA \prec 0,
\]
where $P \in {\mathbb S}^n$ is called the Lyapunov matrix.
\end{lemma}

From~\cref{thm:stability}, the state-feedback problem can be formally written as follows.
\begin{problem}[State-feedback stabilization problem]\label{problem:stabilization}
Find a state-feedback gain $F \in {\mathbb R}^{m\times n}$ such that $A+BF$ is Hurwitz.
\end{problem}

Using~\cref{thm:Lyapunov-theorem}, the state-feedback problem can be formulated as the feasibility problem with matrix inequalities (Lyapunov inequalities)
\[
P \succeq \varepsilon I,\quad (A + BF)^T P + P(A + BF)   \preceq -\varepsilon I,
\]
where $\varepsilon>0$ is some fixed sufficiently small number, and the second inequality is non-convex (bilinear in the decision variables $F$ and $P$), i.e., the feasible set of the second inequality may be non-convex in general. The above results are well known in the control literature, and more comprehensive introductions of LMI-based control design processes can be found in~\cite{Boyd1994,el2000advances}. Here, the bilinear inequality is a special case of non-convexity inequalities, and implies that it is linear if the other variable is fixed, and vice versa. From standard results of linear system theory~\cite{chen1995linear}, it can be proved that the condition is equivalent to
\begin{align}
P \succeq \varepsilon I,\quad (A + BF) P + P(A + BF)^T   \preceq - \varepsilon I.\label{eq:7}
\end{align}

This is because the eigenvalues of $A + BF$ are identical to the eigenvalues of $(A + BF)^T$ by duality, and it implies that $(A + BF)^T$ is Hurwitz and by the Lyapunov theorem in~\cref{thm:Lyapunov-theorem}, $(A + BF)^T$ also admits a Lyapunov matrix $P\in {\mathbb S}^n$ such that~\eqref{eq:7} holds. The corresponding feasibility problem in~\cref{prob:1} can be converted to the equivalent optimization
\begin{align}
&\min_{P \in {\mathbb S}^n ,F \in {\mathbb R}^{m \times n} } \,\,\quad 0\label{eq:10}\\
&{\rm{s}}.{\rm{t}}.\quad \Phi _1 (x) = (A + BF)P + P(A + BF)^T  + \varepsilon I \preceq 0\nonumber\\
&\Phi _2 (x) = \varepsilon I - P \preceq 0,\nonumber
\end{align}
where
\begin{align*}
x = \left[ {\begin{array}{*{20}c}
   P  \\
   F  \\
\end{array}} \right].
\end{align*}
Note that the feasibility problem in~\eqref{eq:7} is equivalent to the optimization in~\eqref{eq:10} in the sense that their solutions are identical. We consider the optimization form in~\eqref{eq:10} to fit the problem into the optimization form in~\cref{prob:1}, and  this it not more than formality. Moreover, note that the problem in~\eqref{eq:10} only includes the inequality constraints, and the problem do not have the objective function, that is, it is a feasibility problem, which can be seen as a special case of the general optimization problems. Therefore, in order to convert the feasibility problem into the optimization problem format, the null objective $0$ has been used.
Next, consider the mapping
\begin{align*}
x = \left[ {\begin{array}{*{20}c}
   P  \\
   F  \\
\end{array}} \right] \mapsto h\left( {\left[ {\begin{array}{*{20}c}
   P  \\
   F  \\
\end{array}} \right]} \right) = \left[ {\begin{array}{*{20}c}
   P  \\
   {FP}  \\
\end{array}} \right],
\end{align*}
and the corresponding change of variables
\begin{align*}
v = \left[ {\begin{array}{*{20}c}
   P  \\
   M  \\
\end{array}} \right] = h\left( {\left[ {\begin{array}{*{20}c}
   P  \\
   F  \\
\end{array}} \right]} \right) = \left[ {\begin{array}{*{20}c}
   P  \\
   {FP}  \\
\end{array}} \right].
\end{align*}

Using this transformation, a convexification of~\eqref{eq:10} is
\begin{align*}
&\min_{P \in {\mathbb S}^n ,M \in {\mathbb R}^{m \times n} } \,\,\quad 0\\
&{\rm s.t.}\quad \Omega _1' (v) = (AP + BM) + (AP + BM)^T  + \varepsilon I \preceq 0,\\
&\Omega _2' (v) = \varepsilon I - P \preceq 0.
\end{align*}

We note that the above conversion from non-convex to convex problems via change of variables is very well-known and popular technique in the control literature, e.g.,~\cite{geromel2007h,kim2005new,ghaffari2022robust,de2022new}.
Over the feasible sets, ${\mathcal F}$ and ${\mathcal F}'$, $P$ is nonsingular. Therefore, $h: {\mathcal F} \to {\mathcal F}'$ is a bijection (and hence, a surjection), and the inverse mapping is given by
\[
h^{ - 1} \left( {\left[ {\begin{array}{*{20}c}
   P  \\
   M  \\
\end{array}} \right]} \right) = \left( {\left[ {\begin{array}{*{20}c}
   P  \\
   {MP^{ - 1} }  \\
\end{array}} \right]} \right),
\]
where $\left[ {\begin{array}{*{20}c}
   P  \\
   M  \\
\end{array}} \right] \in {\mathcal F}'$.
Therefore,~\eqref{eq:10} is initially nonconvex, but can be convexified using some transformations and manipulations. Lastly, we can prove that the original problem in~\eqref{eq:10} satisfies the Slater's condition under a mild assumption, and hence, it satisfies strong duality by~\cref{thm:strong-duality2-matrix}.
\begin{claim}\label{claim:4}
Suppose that $(A,B)$ is stabilizable. Then, there exists a sufficiently small $\varepsilon>0$, $P \in {\mathbb S}^n$ and $F\in {\mathbb R}^{m\times n}$ such that
\begin{align}
(A + BF)P + P(A + BF)^T  + \varepsilon I \preceq 0,\quad \varepsilon I - P \preceq 0.\label{eq:6}
\end{align}
\end{claim}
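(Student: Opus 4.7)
The plan is to reduce the claim to the classical Lyapunov theorem for Hurwitz matrices. The hypothesis that $(A,B)$ is stabilizable furnishes some gain $F_0 \in {\mathbb R}^{m\times n}$ such that $A_c := A + BF_0$ is Hurwitz, and the rest of the argument consists of solving a standard Lyapunov equation and then shrinking $\varepsilon$ enough to absorb the two constraints simultaneously.

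Concretely, first I would invoke stabilizability to fix such an $F_0$. Second, I would appeal to the well-known Lyapunov theorem: since $A_c$ is Hurwitz, the linear equation $A_c P_0 + P_0 A_c^{T} = -I$ admits a unique solution $P_0 \in {\mathbb S}^n_{++}$ (given, for instance, by the convergent integral $P_0 = \int_0^{\infty} e^{A_c t} e^{A_c^{T} t}\,dt$). This $P_0$ is strictly positive definite, so $\lambda_{\min}(P_0) > 0$.

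Third, I would set $F := F_0$ and $P := P_0$ and pick any positive $\varepsilon$ satisfying
\[
0 < \varepsilon < \min\{1,\ \lambda_{\min}(P_0)\}.
\]
With this choice, a direct substitution gives
\[
(A+BF)P + P(A+BF)^{T} + \varepsilon I = -I + \varepsilon I = -(1-\varepsilon) I \prec 0,
\]
and simultaneously $\varepsilon I - P = \varepsilon I - P_0 \prec 0$ because $P_0 \succeq \lambda_{\min}(P_0) I \succ \varepsilon I$. In particular both inequalities hold strictly, which is actually stronger than the $\preceq$ stated in the claim and is precisely what is needed so that the state-feedback problem~\eqref{eq:10} verifies Slater's condition in~\cref{assumption:slater}. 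Strong duality for~\eqref{eq:10} then follows by combining this with the lossless convexification already exhibited via $h(P,F) = (P, FP)$ and~\cref{thm:strong-duality2-matrix}.

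There is essentially no hard step here; the entire argument is textbook Lyapunov theory. The only point demanding any care is the joint choice of $\varepsilon$: it must be small enough simultaneously to keep $-I + \varepsilon I$ negative definite and to keep $\varepsilon I \preceq P_0$, which is why the two-sided bound $\varepsilon < \min\{1, \lambda_{\min}(P_0)\}$ is used. Note also that the order of operations is crucial: $P_0$ must be produced first (so that $\lambda_{\min}(P_0)$ is a well-defined positive number) and only then may $\varepsilon$ be selected; choosing $\varepsilon$ before $P_0$ would force a circular dependence.
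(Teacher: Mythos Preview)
Your proof is correct and follows essentially the same approach as the paper, which simply invokes Lyapunov theory to obtain $P\succ 0$ and $F$ with $(A+BF)P+P(A+BF)^T\prec 0$ and then asserts that a sufficiently small $\varepsilon>0$ makes~\eqref{eq:6} hold. You have merely made the paper's argument explicit by solving the Lyapunov equation $A_cP_0+P_0A_c^T=-I$ and specifying the bound $\varepsilon<\min\{1,\lambda_{\min}(P_0)\}$.
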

\begin{proof}
It is clear from Lyapunov theory in~\cref{thm:Lyapunov-theorem} that if $(A,B)$ is stabilizable, then there exists $P\succ 0$ and $F\in {\mathbb R}^{m\times n}$ such that $(A + BF)P + P(A + BF)^T \prec 0$. Then, there always exists a sufficiently small $\varepsilon>0$ such that~\eqref{eq:6} is satisfied. Therefore, the proof is completed.
\end{proof}
We note that the above claim is somewhat straightforward, yet it has not been formally reported in the literature. Therefore, it is still worth introducing formally for the convenience of presentation.

By~{\bf Claim~3}, the optimization~\eqref{eq:10} admits a strictly feasible solution.
This is because for the feasible $\varepsilon >0$ that satisfies~\eqref{eq:6}, we can divide it by $2$, and then with this $\varepsilon$, the inequalities in~\eqref{eq:10} are satisfied with strict inequalities.
In conclusion, the original problem~\eqref{eq:10} satisfies strong duality by~\cref{thm:strong-duality2-matrix}.

Now, let us consider the output vector
\[
y(t) =Cx(t) \in {\mathbb R}^p,
\]
where $C\in {\mathbb R}^{p\times n}$ is the output matrix. The static output-feedback control problem is designing a static output-feedback control input, $u(t) = F y(t)$, where $F \in {\mathbb R}^{m\times p}$ is called the static output-feedback gain matrix, such that the closed-loop system, $\dot x(t) = Ax(t) + Bu(t) = (A + BFC)x(t)$, is asymptotically stable~\cite{khalil2002nonlinear}. This problem is formally stated in the following.
\begin{problem}[Static output-feedback stabilization]\label{problem:static-output}
Find a feedback gain $F \in {\mathbb R}^{m\times p}$ such that $A+BFC$ is Hurwitz.
\end{problem}
The static output-feedback problem is much more challenging than the state-feedback problem, and is known to be non-convex and NP-hard~\cite{fu1997computational,fu2004pole,blondel1997np}. Using Lyapunov theory in~\cref{thm:Lyapunov-theorem} again, it can be rewritten by the non-convex optimization
\begin{align}
&\min_{P \in {\mathbb S}^n ,F \in {\mathbb R}^{m \times p} } \,\,\quad 0\label{prob:3}\\
&{\rm{s}}.{\rm{t}}.\quad \Phi _1 (x) = (A + BFC)P + P(A + BFC)^T  + \varepsilon I \preceq 0,\nonumber\\
&\Phi _2 (x) = \varepsilon I - P \preceq 0\nonumber,
\end{align}
where
\begin{align*}
x = \left[ {\begin{array}{*{20}c}
   P  \\
   F  \\
\end{array}} \right].
\end{align*}
Due to the matrix $C$, it is hard to find a lossless convexification in general.
This is because, the matrix $C$ is not invertible, and it separates matrices $F$ and $P$.
Therefore, it is hard to guarantee strong duality for~\eqref{prob:3}.

\subsection{Discrete-time case}
Let us consider the discrete-time linear time-invariant (LTI) system
\[
x(k + 1) = Ax(k) + Bu(k),\quad x(0) \in {\mathbb R}^n,
\]
where $A \in {\mathbb R}^{n\times n}$, $B \in {\mathbb R}^{n\times m}$, the integer $k \geq 0$ is the time, $x(k) \in {\mathbb R}^n$ is the state vector, and $u(k) \in {\mathbb R}^m$ is the input vector. Similar to the continuous-time case, we are interested in the state-feedback stabilization problem: design a state-feedback control input, $u(k) = F x(k)$, where $F \in {\mathbb R}^{m\times n}$ is called the state-feedback gain matrix, such that the closed-loop system, $x(k+1) = Ax(k) + Bu(k) = (A + BF)x(k)$, is asymptotically stable~\cite{khalil2002nonlinear}. In the sequel, some definitions and lemmas from the standard linear system theory~\cite{chen1995linear} are firstly reviewed.
\begin{definition}
A matrix $A\in {\mathbb R}^{n\times n}$ is said to be Schur when every eigenvalue of $A$ has a magnitude strictly less than one.
\end{definition}
\begin{lemma}[{\cite[page~174]{chen1995linear}}]\label{thm:stability2}
The system $x(k+1) = Ax(k)$ is asymptotically stable if and only if $A$ is Schur.
\end{lemma}

The Laypunov method given below provides a way to check the stability of $x(k+1) = Ax(k)$ and the Schur property of $A$ without actually calculating the eigenvalues.
\begin{lemma}[{\cite[Thm.~5.D5]{chen1995linear}}]\label{thm:Lyapunov-theorem2}
A matrix $A \in {\mathbb R}^{n\times n}$ is Schur if and only if there exists a symmetric matrix $P\in {\mathbb S}^n$ such that
\[
P \succ 0,\quad A^T P A - P \prec 0,
\]
where $P \in {\mathbb S}^n$ is called the Lyapunov matrix.
\end{lemma}

As in the continuous-time case, using~\cref{thm:stability2}, the corresponding state-feedback stabilization problem is stated below.
\begin{problem}[State-feedback stabilization problem]\label{problem:stabilization2}
Find a feedback gain $F \in {\mathbb R}^{m\times n}$ such that $A+BF$ is Schur.
\end{problem}

Now, based on~\cref{thm:Lyapunov-theorem2}, the problem can be formulated as the Lyapunov inequality
\[
P \succeq \varepsilon I,\quad (A + BF)^T P(A + BF)- P \preceq -\varepsilon I,
\]
where $P \in {\mathbb S}^n$ is called the Lyapunov matrix, and the second inequality is in general non-convex.
Moreover, by duality of LTI systems again, it can be equivalently written as
\[
P \succeq \varepsilon I,\quad (A + BF) P(A + BF)^T- P \preceq -\varepsilon I.
\]
\cref{problem:stabilization2} can be converted to the equivalent optimization
\begin{align}
&\inf_{P \in {\mathbb S}^n ,F \in {\mathbb R}^{m \times n} } 0 \label{eq:11}\\
&{\rm s.t.}\quad (A + BF) P(A + BF)^T - P + \varepsilon I = \Phi_1(x) \preceq 0, \nonumber\\
& \varepsilon I-P= \Phi_2(x)\preceq 0 \nonumber
\end{align}
where
\begin{align*}
x = \left[ {\begin{array}{*{20}c}
   P  \\
   F  \\
\end{array}} \right].
\end{align*}

As in the continuous-time case, the above results are well known in the control literature, and more comprehensive introductions of LMI-based control design processes can be found in~\cite{Boyd1994,el2000advances}.
Next, let us consider the mapping
\begin{align*}
x = \left[ {\begin{array}{*{20}c}
   P  \\
   F  \\
\end{array}} \right] \mapsto h\left( {\left[ {\begin{array}{*{20}c}
   P  \\
   F  \\
\end{array}} \right]} \right) = \left[ {\begin{array}{*{20}c}
   P  \\
   {FP}  \\
\end{array}} \right]
\end{align*}
and the corresponding change of variables
\begin{align*}
v = \left[ {\begin{array}{*{20}c}
   P  \\
   M  \\
\end{array}} \right] = h\left( {\left[ {\begin{array}{*{20}c}
   P  \\
   F  \\
\end{array}} \right]} \right) = \left[ {\begin{array}{*{20}c}
   P  \\
   {FP}  \\
\end{array}} \right].
\end{align*}

Using this transformation, a convexification is
\begin{align}
&\inf_{P \in {\mathbb S}^n ,M \in {\mathbb R}^{m \times n} } 0 \label{eq:14}\\
&{\rm s.t.}\quad (AP + BM)P^{ - 1} (AP + BM)^T  - P + \varepsilon I = \Phi _1' (v)\preceq 0,\nonumber\\
& \varepsilon I-P= \Phi_2'(v)\preceq 0. \nonumber
\end{align}
We note that as in the continuous-time case, the above conversion from non-convex to convex problems via change of variables is very well-known and popular technique in the control literature, e.g.,~\cite{de1999new,kim2005new,ghaffari2022robust,de2022new}.

The domain of $\Phi _1'$ is $\{ (P,M) \in {\mathbb S}^n  \times {\mathbb R}^{m \times n} :P \succ 0\}$.
Over the domain, we can prove that $\Phi _1' (v) \preceq 0$ is convex. To prove it, one can check the convexity of the interior
\begin{align*}
{\bf int}({\mathcal F}):=&\{ (P,M) \in {\mathbb S}^n  \times {\mathbb R}^{m \times n} :P \succ \varepsilon I,\\
&(AP + BM)P^{ - 1} (AP + BM)^T  - P + \varepsilon I \prec 0\},
\end{align*}
where ${\bf int}$ denotes the interior. Note that the set ${\bf int}({\mathcal F})$ is convex because after taking the Schur complement~\cite{Boyd1994}, it is equivalently expressed as
\begin{align*}
{\bf int}({\mathcal F}):=&\{ (P,M) \in {\mathbb S}^n  \times {\mathbb R}^{m \times n} :P \succ \varepsilon I,\\
&\left. {\left[ {\begin{array}{*{20}c}
   { - P + \varepsilon I} & {(AP + BM)}  \\
   {(AP + BM)^T } & { - P}  \\
\end{array}} \right] \prec 0} \right\},
\end{align*}
which is convex. In conclusion,~\eqref{eq:11} is initially a nonconvex bilinear matrix inequality problem, but can be convexified using some transformations and manipulations. Moreover, $P$ is nonsingular in ${\mathcal F}$ and ${\mathcal F}'$. Therefore, $h$ is a bijection, and the inverse mapping is given by
\[
h^{ - 1} \left( {\left[ {\begin{array}{*{20}c}
   P  \\
   M  \\
\end{array}} \right]} \right) = \left[ {\begin{array}{*{20}c}
   P  \\
   {MP^{ - 1} }  \\
\end{array}} \right].
\]
By~\cref{def:lossless-convexification},~\eqref{eq:14} is a lossless convexification of~\eqref{eq:11}.
Lastly, we can prove that the original problem in~\eqref{eq:10} satisfies the Slater's condition under a mild assumption.
\begin{claim}\label{claim:5}
Suppose that $(A,B)$ is stabilizable. Then, there exists a sufficiently small $\varepsilon>0$, $P \in {\mathbb S}^n$ and $F\in {\mathbb R}^{m\times n}$ such that
\[
(A+BF)P(A+BF)^T - P \prec -\varepsilon I,\quad \varepsilon I - P \prec 0.
\]
\end{claim}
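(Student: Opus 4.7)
The plan is to mirror the proof of \textbf{Claim~3} (the continuous-time case), replacing the continuous Lyapunov equation with the discrete one. The structure is essentially the same: first produce a stabilizing feedback gain, then construct a positive definite Lyapunov matrix using discrete Lyapunov theory, and finally choose $\varepsilon$ small enough so that strict inequalities are preserved.

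First I would use the hypothesis that $(A,B)$ is stabilizable in the discrete-time sense, which guarantees the existence of $F \in \mathbb{R}^{m \times n}$ such that $A+BF$ is Schur, i.e., $\rho(A+BF) < 1$. From standard linear system theory (see, e.g., \cite{chen1995linear}), it is then known that for any $Q \succ 0$ the discrete Lyapunov equation
\[
(A+BF)P(A+BF)^T - P = -Q
\]
admits a unique symmetric positive definite solution $P \succ 0$. Picking $Q = I$ gives us a matrix $P \succ 0$ with $(A+BF)P(A+BF)^T - P = -I \prec 0$.

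Next I would handle the choice of $\varepsilon$. Since $P \succ 0$, we have $P \succeq \lambda_{\min}(P) I$, so choosing any $\varepsilon_1 \in (0,\lambda_{\min}(P))$ yields $\varepsilon_1 I - P \prec 0$. Similarly, since $(A+BF)P(A+BF)^T - P \prec 0$, there is some $\sigma > 0$ with $(A+BF)P(A+BF)^T - P \preceq -\sigma I$; any $\varepsilon_2 \in (0,\sigma)$ gives $(A+BF)P(A+BF)^T - P \prec -\varepsilon_2 I$. Taking $\varepsilon := \min(\varepsilon_1,\varepsilon_2)/2 > 0$ then yields both strict inequalities simultaneously.

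There is no serious obstacle: the only subtlety is making sure to invoke the discrete Lyapunov result (rather than the continuous one used in \textbf{Claim~3}) and being careful that the strict inequalities are preserved when shrinking $\varepsilon$. The claim then supplies a strictly feasible point for the discrete-time optimization~\eqref{eq:11}, so combined with the lossless convexification~\eqref{eq:14} established just above, Slater's condition holds and \cref{thm:strong-duality2-matrix} delivers strong duality for the discrete-time state-feedback stabilization problem.
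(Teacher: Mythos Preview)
Your proposal is correct and follows essentially the same approach as the paper's proof, which simply invokes discrete Lyapunov theory to obtain $P\succ 0$ and $F$ with $(A+BF)P(A+BF)^T-P\prec 0$ and then declares the result immediate. Your version is just more explicit about solving the discrete Lyapunov equation with $Q=I$ and about quantifying the choice of $\varepsilon$ via $\lambda_{\min}(P)$ and the spectral gap $\sigma$, but the underlying idea is identical.
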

\begin{proof}
It is clear from Lyapunov theory in~\cref{thm:Lyapunov-theorem2} that if $(A,B)$ is stabilizable, then there exists $P\succ 0$ and $F\in {\mathbb R}^{m\times n}$ such that $(A+BF)P(A+BF)^T - P \prec 0$. Therefore, the proof is completed.
\end{proof}
Therefore,~\eqref{eq:11} admits a strictly feasible solution, and satisfies the Slater's condition.
By~\cref{thm:strong-duality2-matrix}, the problem in~\eqref{problem:stabilization} satisfies strong duality.

Finally, the static output-feedback stabilization problem for discrete-time systems is omitted here for brevity, but this problem can be addressed in similar ways as in the continuous-time cases.

\section*{Conclusion}
In this paper, we have studied strong duality of non-convex semidefinite programming problems (SDPs). It turns out that a class of non-convex SDPs with special structures satisfies strong duality under the Slater's condition. Examples have been given to illustrate the proposed results. We expect that the proposed analysis can potentially deepen our understanding of non-convex SDPs arising in control communities, and promote their analysis based on KKT conditions. In particular, the developed results can be used to reveal connections between several control-related results and SDP dualities as in~\cite{balakrishnan2003semidefinite}. Moreover, the results can be also applied to develop new algorithms for control designs, such as the static output-feedback design~\cite{el1997cone,crusius1999sufficient,geromel1998static}. Another potential topic is to investigate strong duality of non-convex SDPs which can be convexified with several conversions of the problems using Schur complement~\cite{Boyd1994} and its variations. These agendas can be potential future directions.

\bibliographystyle{IEEEtran}
\bibliography{reference}

\end{document}